\documentclass[11pt]{article}
\usepackage[margin=1in]{geometry}
\usepackage{amsmath,amssymb,amsthm,mathtools}
\usepackage[T1]{fontenc}
\usepackage{lmodern,microtype}
\usepackage[colorlinks=true,linkcolor=blue,citecolor=blue,urlcolor=blue]{hyperref}
\hypersetup{pdftitle={Ghirlanda-Guerra identities and cavity equations for diluted spin glass models},pdfauthor={Dmitry Panchenko and Giorgio Parisi}}
\newcommand{\E}{\mathbb E}
\newcommand{\N}{\mathbb N}
\newcommand{\Var}{\operatorname{Var}}
\newcommand{\Pois}{\operatorname{Pois}}
\newcommand{\Av}{\operatorname{Av}}
\newcommand{\TV}{\operatorname{TV}}
\newcommand{\cA}{\mathcal A}
\newcommand{\cP}{\mathcal P}
\newcommand{\cH}{\mathcal H}
\newcommand{\cG}{\mathcal G}
\newcommand{\cZ}{\mathcal Z}
\newcommand{\bx}{\boldsymbol x}
\newcommand{\one}{\mathbf 1}
\newcommand{\la}{\langle}
\newcommand{\ra}{\rangle}
\newtheorem{theorem}{Theorem}[section]
\newtheorem{lemma}[theorem]{Lemma}
\newtheorem{proposition}[theorem]{Proposition}
\newtheorem{corollary}[theorem]{Corollary}
\theoremstyle{remark}

\numberwithin{equation}{section}
\title{Ghirlanda--Guerra identities and cavity equations\\for diluted spin glass models}
\author{Dmitry Panchenko\thanks{\tiny Department of Mathematics, University of Toronto, panchenk@math.toronto.edu. Partially supported by NSERC grant.}
}
\date{}
\begin{document}
\maketitle
\begin{abstract}
This note supplies the details of the cavity lower bound used in
\emph{The M\'ezard-Parisi formula} \cite{MPformula}. Specifically, we
show that small perturbations that do not change the limiting free
energy can be chosen so that the asymptotic Gibbs measures arising
in the Aizenman--Sims--Starr lower bound satisfy both the
Ghirlanda--Guerra identities and the cavity equations. These
calculations are standard and require only minor modifications of
arguments already available in the literature. We record them here
for completeness, avoiding a lengthy repetition of these arguments
in \cite{MPformula}.
\end{abstract}

\section{The model and the statement}

This note supplies the details of the cavity lower bound used in
\emph{The M\'ezard-Parisi formula} \cite{MPformula}. The asymptotic cavity
construction in \cite{Spins} gives limiting spin distributions satisfying
the cavity equations, while the generic Gaussian perturbation in
\cite[Section 3.2]{SKbook} gives the Ghirlanda--Guerra identities. Their
simultaneous use for diluted models was stated in \cite[Theorem 1]{rRSB}.
We give the estimates needed to apply these constructions to a countable
family of unbounded clauses, including the full diluted random energy
model (REM) perturbation used in \cite{MPformula}.

There are two regularizations in addition to the extensive clauses of the
model. A Poisson number of independent cavity factors makes the limiting
spin distribution invariant under the addition of finitely many sites.
An independent generic mixed $p$-spin Gaussian field gives the
Ghirlanda--Guerra identities. Both have vanishing free-energy cost per
spin. The Gaussian covariance changes by $o(1)$ under a fixed cavity
shift, so the two constructions are compatible. We choose the Gaussian
coefficients and the volumes jointly to retain the cavity lower bound.

The general construction is stated in Theorem~\ref{thm:main}; its specialization to
the REM perturbation is Corollary~\ref{cor:A4}, which gives precisely the
input stated in \cite[Theorem 3.1]{MPformula}. In the REM specialization,
all regularization limits are taken at fixed perturbation strength
$\epsilon>0$.

\paragraph{Acknowledgement about the use of AI.}
This note was written by ChatGPT Astra. I had previously carried out these calculations in detail while preparing \cite{rRSB}, but had not written them down.

\subsection{Countably many clause types}

Let $\cA$ be a finite or countable set. Type $a\in\cA$ has arity $d_a\ge2$, intensity $\lambda_a>0$, and a random function
$\theta_a:\{-1,1\}^{d_a}\to\mathbb R$ whose law is invariant under permutations of its inputs. Let $u:\{-1,1\}\to\mathbb R$ be a random unary field, and put
\[
 U=\max_{t=\pm1}|u(t)|,\qquad
 J_a=\max_{\boldsymbol t\in\{-1,1\}^{d_a}}|\theta_a(\boldsymbol t)|.
\]
We assume the moment bounds
\begin{equation}\label{eq:moments}
 \begin{aligned}
 K_1&:=\E U+\sum_{a\in\cA}\lambda_a d_a^2\E J_a<\infty,\\
 K_2&:=\E U^2+\sum_{a\in\cA}\lambda_a d_a^2\E J_a^2<\infty.
 \end{aligned}
\end{equation}
All copies of unary fields, clauses, Poisson variables and indices below are independent, unless a coupling is explicitly specified. For $\Sigma_N=\{-1,1\}^N$ define
\begin{equation}\label{eq:model}
 \begin{aligned}
 H_N(\sigma)&=\sum_{i=1}^N u_i(\sigma_i)
 +\sum_{a\in\cA}\sum_{j=1}^{\pi_a(N\lambda_a)}
       \theta_{a,j}(\sigma_{i_{a,j,1}},\ldots,\sigma_{i_{a,j,d_a}}),
 \\
 F_N&=\frac1N\E\log\sum_{\sigma\in\Sigma_N}e^{H_N(\sigma)}.
 \end{aligned}
\end{equation}
Here $\pi_a(t)$ has Poisson mean $t$, and each index is uniform on $\{1,\ldots,N\}$; repetitions are allowed. The series converges absolutely in supremum norm almost surely, since the expected sum of its absolute clause bounds is $N\sum_a\lambda_a\E J_a<\infty$. Moreover, $(F_N)$ is bounded.

\subsection{The limiting identities and the functional}

A spin law is represented by a random probability measure $G$ on
\[
 \cH=\{m\in L^2([0,1]): |m(v)|\le1\text{ almost everywhere}\}.
\]
Conditionally on $G$, sample independent states $m^\ell\sim G$.
Independently of $G$ and the states, take site variables $v_i$ and thermal
variables $x_{i,\ell}$, all independent and uniform on $[0,1]$. The spins
and off-diagonal overlaps are
\begin{equation}\label{eq:spins}
 \sigma_i^\ell=2\one\{x_{i,\ell}\le(1+m^\ell(v_i))/2\}-1,
 \qquad R_{\ell,\ell'}=\int_0^1m^\ell(v)m^{\ell'}(v)\,dv
 \quad(\ell\ne\ell').
\end{equation}
Brackets denote averaging over states sampled from $G$.
The outer expectation averages the remaining randomness,
including the site variables and thermal variables when they occur in
an observable. In finite volume, brackets denote the Gibbs expectation
over spin configurations, and the outer expectation averages the
disorder and any independently sampled site indices.

The spin Ghirlanda--Guerra identities \cite{GG} are
\begin{equation}\label{eq:GG}
 \E\la f\psi(R_{1,n+1})\ra
 =\frac1n\E\la f\ra\,\E\la\psi(R_{1,2})\ra
    +\frac1n\sum_{\ell=2}^n\E\la f\psi(R_{1,\ell})\ra,
\end{equation}
for $n\ge2$, bounded measurable $f$ of the spins of the first $n$ replicas, and bounded Borel $\psi:[-1,1]\to\mathbb R$.

Extend each clause to magnetizations by its Boltzmann factor:
\begin{equation}\label{eq:extension}
 e^{\theta_a(x_1,\ldots,x_{d_a})}
 =\sum_{\boldsymbol t\in\{-1,1\}^{d_a}}
       e^{\theta_a(\boldsymbol t)}
           \prod_{j=1}^{d_a}\frac{1+t_jx_j}{2}.
\end{equation}
This convention includes inputs fixed at $\pm1$, and $|\theta_a(x)|\le J_a$.
For a function of a cavity spin, write
$\Av_\varsigma h(\varsigma)=(h(1)+h(-1))/2$.
Using a fresh independent site variable for every incoming argument,
define the cavity and correction fields
\begin{align}
 A_i(m,\varsigma)&=u_i(\varsigma)+\sum_{a\in\cA}\sum_{j=1}^{\pi_{i,a}(\lambda_a d_a)}
  \theta_{i,a,j}\bigl(m(v_{i,a,j,1}),\ldots,
                    m(v_{i,a,j,d_a-1}),\varsigma\bigr),\label{eq:A}\\
 B(m)&=\sum_{a\in\cA}\sum_{j=1}^{\widehat\pi_a(\lambda_a(d_a-1))}
  \widehat\theta_{a,j}\bigl(m(\widehat v_{a,j,1}),\ldots,
                          m(\widehat v_{a,j,d_a})\bigr).\label{eq:B}
\end{align}
All this fresh randomness is independent of $G$. Define
\begin{align}
 A_i(m)&=\log\Av_\varsigma e^{A_i(m,\varsigma)},\qquad
 \xi_i(m)=\frac{\Av_\varsigma\varsigma e^{A_i(m,\varsigma)}}{e^{A_i(m)}},\notag\\
 \Lambda(m)&=\exp\sum_{i=1}^s A_i(m),\qquad
 \cZ=\la\Lambda\ra.\label{eq:outputs}
\end{align}
Thus $e^{A_i(m)}$ is the cavity multiplier and $\xi_i(m)$ is the cavity
output. The number $s$ of cavity sites in $\Lambda$ and $\cZ$ is always
fixed in the identity under consideration.
For integers $s,n\ge1$, $L_0\ge s$, and sets
$C_\ell\subseteq\{1,\ldots,L_0\}$, $1\le\ell\le n$, let
$C_\ell^1=C_\ell\cap\{1,\ldots,s\}$ and $C_\ell^2=C_\ell\setminus C_\ell^1$.
The cavity equations are
\begin{equation}\label{eq:SC}
 \E\prod_{\ell=1}^n\left\la\prod_{i\in C_\ell}m(v_i)\right\ra
 =\E\prod_{\ell=1}^n
  \frac{\left\la\Lambda(m)
          \prod_{i\in C_\ell^1}\xi_i(m)
          \prod_{i\in C_\ell^2}m(v_i)\right\ra}
       {\cZ}.
\end{equation}
The noncavity coordinates on the right are independent of the incoming coordinates in \eqref{eq:A}. These are the equations with no extra borrowed clauses in \cite[Theorem 1]{Spins}, in the magnetization notation of \cite{rRSB}.

The Aizenman--Sims--Starr cavity functional \cite{ASS,Spins} is
\begin{equation}\label{eq:functional}
 \cP(G)=\log2+\E\log\la e^{A_1}\ra-\E\log\la e^B\ra.
\end{equation}
All series and expectations in \eqref{eq:A}--\eqref{eq:functional} are
well defined under \eqref{eq:moments}. The fields $A_i(m,\varsigma)$,
$A_i(m)$ and $B(m)$ are bounded uniformly in their spin and state
arguments by sums of absolute clause bounds with finite expectation.
The same envelopes bound the logarithms in \eqref{eq:functional}.
Moreover, $|\xi_i(m)|\le1$ and $\cZ>0$ almost surely.

\begin{theorem}[Joint regularization and cavity lower bound]\label{thm:main}
Under \eqref{eq:moments}, there is a random spin law $G$ satisfying both \eqref{eq:GG} and \eqref{eq:SC} such that
\begin{equation}\label{eq:mainbound}
 \liminf_{N\to\infty}F_N\ge\cP(G).
\end{equation}
More precisely, $G$ is a subsequential spin limit of the regularized models defined below, with deterministic choices of the Gaussian coefficients. Their free energy per spin differs from $F_N$ by $o(1)$ uniformly over those coefficients. Along the selected sequence, their one-site cavity increments converge to $\cP(G)$.
\end{theorem}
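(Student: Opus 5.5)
The plan is to run the standard Aizenman--Sims--Starr argument on a doubly regularized Hamiltonian, and to pass to the limit along a subsequence chosen so that both families of identities hold at once.

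\textbf{Step 1: the regularized Hamiltonian.} For a sequence $M=M_N\to\infty$ with $M_N=o(N)$, add to \eqref{eq:model} a cavity perturbation consisting of $\pi(M_N)$ independent copies of the cavity fields $A_i(\cdot,\sigma_{j_i})$ attached to uniformly chosen sites, as in \cite{Spins}. Also add $s_N\sum_{p\ge1}2^{-p}x_p\, g_p(\sigma)$, where $g_p$ are independent $p$-spin Gaussian fields normalized by $N^{-p/2}$, the parameters satisfy $x=(x_p)\in[1,2]^{\N}$, and $s_N=N^{\gamma}$ with $1/4<\gamma<1/2$, as in \cite[Section 3.2]{SKbook}.

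\textbf{Step 2: vanishing free-energy cost.} By Jensen and the moment bounds \eqref{eq:moments}, the Gaussian term changes $F_N$ by at most $s_N^2/N\to0$, uniformly in $x$. The cavity term changes $F_N$ by at most $M_N K_1/N\to0$.

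\textbf{Step 3: the Aizenman--Sims--Starr decomposition.} Write $N F_N$ as a telescoping sum, so that $\liminf F_N\ge\liminf(\E\log Z_{N+1}-\E\log Z_N)$. Compare the $(N+1)$-system and the $N$-system with a common $(N-1)$-site system, coupling the Poisson clause counts. This expresses the increment as $\log 2+\E\log\la e^{A_1}\ra-\E\log\la e^{B}\ra+o(1)$, with the averages taken under the Gibbs measure of the reduced system.

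Two error sources must be controlled:
\begin{itemize}
\item the clauses hitting the new site twice, controlled by $d_a^2/N$ and $K_1$;
\item the change of the Gaussian covariance under the one-site shift, which is $O(s_N^2/N)=o(1)$.
\end{itemize}
Unbounded clauses are handled by truncating to finitely many types with $J_a\le T$. The tails are controlled in $L^1$ through $K_1$ and $K_2$; $K_2$ is needed for the variance bounds of the Poisson sums.

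\textbf{Step 4: choosing $x$ and the subsequence.} The Ghirlanda--Guerra bound of \cite[Section 3.2]{SKbook} gives
\[
\int_{[1,2]^{\N}}\Delta_N(x)\,dx\to0,
\]
where $\Delta_N(x)$ measures the failure of \eqref{eq:GG} for fixed finite $f$ and $\psi$ of polynomial type. It rests on concentration of $\partial_{x_p}$ of the free energy, which holds because the remaining Hamiltonian merely enters as a random external field.

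For the cavity equations, the argument of \cite{Spins} gives exact invariance in the limit. Adding $s$ new cavity sites changes $\pi(M_N)$ by a finite amount, which is invisible in total variation as $M_N\to\infty$. The Gaussian covariance shifts by $o(1)$, so its effect disappears uniformly in $x$.

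To combine the requirements, choose $x_N$ deterministically such that $\Delta_N(x_N)$, over a countable dense family of test functions weighted summably, is at most twice its average. Since the free-energy estimates and the increment formula are uniform in $x$, the lower bound survives this choice.

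\textbf{Step 5: passage to the limit.} Take a subsequence along which the increments converge to the $\liminf$ and the spin arrays converge in law. By the Dovbysh--Sudakov / Panchenko representation, the limit is generated by a random spin law $G$ as in \eqref{eq:spins}. Both identities, \eqref{eq:GG} and \eqref{eq:SC}, pass to the limit by density of the test functions. Finally, $\cP$ is continuous along this convergence after truncation, with uniform $L^1$ tails, which gives \eqref{eq:mainbound}.

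\textbf{Main obstacle.} I expect the hardest part to be the uniformity in $x$ of the cavity-equation error in the presence of unbounded countable clauses. One must show that the $s$-site cavity shift alters the Gibbs averages in \eqref{eq:SC} by $o(1)$ uniformly in $x$. This requires the truncation bounds from $K_2$ in combination with the total-variation estimate for $\Pois(M_N)$ versus $\Pois(M_N)+s$.
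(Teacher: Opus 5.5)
Your overall architecture (cavity plus Gaussian regularization, the Aizenman--Sims--Starr increment, passage to spin limits) matches the paper's. There is, however, a genuine gap in Step 4, at the claim that the lower bound survives the choice of $x_N$ because the free-energy estimates and the increment formula are uniform in $x$. The inequality $\liminf_N F_N\ge\liminf_N D_N$, with $D_N(x)=\E\log\widehat Z_{N+1}(x)-\E\log\widehat Z_N(x)$, comes from telescoping $\sum_{k<N}D_k(x)$ at one \emph{fixed} $x$. Once $x_N$ depends on $N$, the sum $\sum_{k<N}D_k(x_k)$ no longer telescopes. It differs from $\E\log\widehat Z_N(x_{N-1})-\E\log\widehat Z_1(x_1)$ by the terms $\E\log\widehat Z_k(x_{k-1})-\E\log\widehat Z_k(x_k)$. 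The cost bound controls each of these only by $O(s_k^2)$, and these bounds sum to order $Ns_N^2$, not $o(N)$. Uniform $o(1)$ control of the free energy \emph{per spin} says nothing about an order-one increment at a particular coefficient, and nothing prevents $D_N(x)$ from varying by $O(1)$ in $x$. Uniformity of the increment representation only identifies $\lim_j D_{N_j}(x_{N_j})$ with $\cP(G)$ along a spin-convergent subsequence; it does not compare this limit with $\liminf_N F_N$. So the subsequence in Step 5 along which ``the increments converge to the $\liminf$'' gives $\cP(G)=\liminf_N D_N(x_N)$, with no reason for this to be at most $\liminf_N F_N$. Your selection rule constrains only the Ghirlanda--Guerra defect. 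Moreover, the set where that defect is at most twice its average may have measure only $1/2$, too small to control the increment there by averaging.

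The missing idea is a \emph{joint} selection of $N$ and $x$, which is how the paper argues. The averaged quantity $a_N=\int\E\log\widehat Z_N\,d\nu$ does telescope. Hence $\overline D_N=\int D_N\,d\nu=a_{N+1}-a_N$ satisfies $\liminf_N\overline D_N\le\liminf_N a_N/N=\liminf_N F_N=:L$ by \eqref{eq:cost} and \eqref{eq:Cesaro}; fix $N_j$ with $\overline D_{N_j}\le L+o(1)$. Let $\Gamma_N$ be the summably weighted Ghirlanda--Guerra defect (your $\Delta_N$) with average $r_N=\int\Gamma_N\,d\nu\to0$. Take $\mathcal X_N=\{\Gamma_N\le\eta_N\}$ with $\eta_N\to0$ and $r_N/\eta_N\to0$, so that $\nu(\mathcal X_N^c)\to0$ by Markov. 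The uniform bound \eqref{eq:boundedD} on $|D_N|$ then gives $\inf_{\mathcal X_N}D_N\le\overline D_N+o(1)$, as in \eqref{eq:selection}. A point $x^{(j)}\in\mathcal X_{N_j}$ nearly attaining this infimum has both $\Gamma_{N_j}(x^{(j)})\to0$ and $\limsup_j D_{N_j}(x^{(j)})\le L$; only then extract the spin-convergent subsequence.

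Two smaller remarks. First, your cavity regularization should be $\sum_{\ell\le\pi(M_N)}\log\Av_\varsigma e^{A_{N,\ell}(\sigma,\varsigma)}$, with the cavity spin averaged out as in \cite{Spins}. Fields evaluated at the spin $\sigma_{j_i}$ of an existing site are just extra clauses. Integrating out $s$ new sites would then produce terms of a different law, so the Poisson count-shift absorption in your Step 4 would not apply. Second, the uniformity in $x$ that you single out as the main obstacle is routine. In the shift comparisons, $x$ enters only through $\sup_x(\|Q_x\|_\infty+\|Q_x'\|_\infty)$ in the Gaussian covariance estimate, and the clause and count comparisons do not involve $x$ at all.
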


The lower bound does not use a sign or convexity condition on the clauses. It applies, in particular, to models for which a matching interpolation upper bound is unavailable. The proof occupies Sections~\ref{sec:regularizations}--\ref{sec:selection}. Section~\ref{sec:REM} verifies the assumptions and the exact fields for the REM perturbation.

\section{The two regularizations}\label{sec:regularizations}

We first specify the regularized finite models. Their two additional terms are independent of one another and of $H_N$.

For each $\ell$, let $A_{N,\ell}(\sigma,\varsigma)$ be a fresh copy of the finite-volume cavity field
\begin{equation}\label{eq:AN}
 A_{N,\ell}(\sigma,\varsigma)=u_\ell(\varsigma)
  +\sum_{a\in\cA}\sum_{j=1}^{\pi_{\ell,a}(\lambda_a d_a)}
       \theta_{\ell,a,j}
          (\sigma_{i_{\ell,a,j,1}},\ldots,
           \sigma_{i_{\ell,a,j,d_a-1}},\varsigma),
\end{equation}
with incoming indices uniform on $\{1,\ldots,N\}$. Set
\begin{equation}\label{eq:reservoir}
 C_N(\sigma)=\sum_{\ell=1}^{\pi(c_N)}
                     \log\Av_\varsigma e^{A_{N,\ell}(\sigma,\varsigma)},
 \qquad c_N=N^{1/4}.
\end{equation}
This is the cavity regularization of \cite{Spins}, with the full field of the present model.

For $p\ge1$, let $g_{N,p}$ be independent centered Gaussian processes with
\begin{equation}\label{eq:g}
 g_{N,p}(\sigma)=N^{-p/2}\sum_{i_1,\ldots,i_p=1}^N
                       g_{i_1\ldots i_p}^{(p)}\sigma_{i_1}\cdots\sigma_{i_p},
 \qquad \E g_{N,p}(\sigma)g_{N,p}(\tau)=R_N(\sigma,\tau)^p,
\end{equation}
Here all coefficients $g_{i_1\ldots i_p}^{(p)}$ are independent standard
Gaussians, and $R_N(\sigma,\tau)=N^{-1}\sum_i\sigma_i\tau_i$.
For a coefficient sequence $\bx=(x_p)_{p\ge1}$, put
\begin{equation}\label{eq:generic}
 g_{N,\bx}=\sum_{p\ge1}2^{-p}x_pg_{N,p},\qquad
 Q_{\bx}(t)=\sum_{p\ge1}4^{-p}x_p^2t^p,\qquad s_N=N^{3/8}.
\end{equation}
We use $\bx\in[1,2]^{\N}$, and allow $\bx\in[0,3]^{\N}$ in the concentration estimates. For each fixed $N$ the series converges almost surely uniformly over $\Sigma_N$ and over these coefficient sequences. Define
\begin{equation}\label{eq:regularized}
 \widehat H_{N,\bx}=H_N+C_N+s_Ng_{N,\bx},\qquad
 \widehat Z_N(\bx)=\sum_{\sigma\in\Sigma_N}e^{\widehat H_{N,\bx}(\sigma)}.
\end{equation}
Let $\la\cdot\ra_{N,\bx}$ be its Gibbs expectation, and let
$\mu_{N,\bx}$ denote the annealed law of its replica spin array, using
the sampled-site convention of Section~\ref{sec:spincompactness}.
The coefficients $\bx$ are deterministic when the disorder is averaged.

An absolute bound for one cavity factor has the law
\begin{equation}\label{eq:envelope}
 W=U+\sum_{a\in\cA}\sum_{j=1}^{\pi_a(\lambda_a d_a)}J_{a,j}.
\end{equation}
In particular, $\E W^2<\infty$. This follows from the compound-Poisson second-moment formula and \eqref{eq:moments}; the square of the mean and the sum $\sum_a\lambda_a d_a\E J_a^2$ are both finite.

\begin{lemma}[Free-energy cost and concentration]\label{lem:pressureconc}
Uniformly over $\bx\in[0,3]^{\N}$,
\begin{align}
 \left|N^{-1}\E\log\widehat Z_N(\bx)-F_N\right|
      &\le\frac{c_N\E W}{N}+\frac{3s_N^2}{2N},\label{eq:cost}\\
 \Var(\log\widehat Z_N(\bx))
      &\le 2N\E U^2+N\sum_a\lambda_a\E J_a^2
                      +c_N\E W^2+3s_N^2.\label{eq:variance}
\end{align}
Consequently
\begin{equation}\label{eq:vN}
 \sup_{\bx}\E\left|\log\widehat Z_N(\bx)
                         -\E\log\widehat Z_N(\bx)\right|
             \le v_N:=C\sqrt N,
 \qquad v_N/s_N^2\longrightarrow0.
\end{equation}
\end{lemma}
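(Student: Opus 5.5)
The plan is to treat the two estimates separately: the cost bound \eqref{eq:cost} by comparing partition functions, and the variance bound \eqref{eq:variance} by Efron--Stein for the Poisson/clause randomness plus Gaussian concentration for the field $g_{N,\bx}$. Throughout, $Z_N=\sum_\sigma e^{H_N(\sigma)}$ and $\la\cdot\ra$ is the Gibbs measure of $H_N$.

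\emph{Cost.} Write
\[
 \log\widehat Z_N=\log Z_N+\log\la e^{C_N+s_Ng_{N,\bx}}\ra .
\]
For the reservoir, each term of $C_N$ satisfies $|\log\Av_\varsigma e^{A_{N,\ell}}|\le W_\ell$, where $W_\ell$ is the envelope \eqref{eq:envelope}. Hence $|C_N|\le\sum_{\ell\le\pi(c_N)}W_\ell$ uniformly in $\sigma$, and taking expectations gives the term $c_N\E W$.

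For the Gaussian part, condition on everything else. Jensen gives the upper bound
\[
 \E_g\log\la e^{s_Ng}\ra\le\log\la\E_g e^{s_Ng}\ra=\tfrac{s_N^2}2Q_{\bx}(1).
\]
The lower bound uses Jensen the other way, $\E_g\log\la e^{s_Ng}\ra\ge\la\E_g s_Ng\ra=0$. Since $Q_{\bx}(1)\le\sum_p 9\cdot4^{-p}=3$, both sides are at most $3s_N^2/2$ in absolute value. Dividing by $N$ gives \eqref{eq:cost}, uniformly in $\bx\in[0,3]^\N$.

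\emph{Variance.} Split $\Var(\log\widehat Z)$ as
\[
 \E\Var_g(\log\widehat Z\mid\mathcal F)+\Var\bigl(\E_g[\log\widehat Z\mid\mathcal F]\bigr),
\]
where $\mathcal F$ is the non-Gaussian randomness.

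For the first term, $\log\widehat Z$ is a Lipschitz function of the Gaussian coefficients. Its gradient squared norm is
\[
 s_N^2\sum_p4^{-p}x_p^2\la R^p\ra\text{-type terms}\le s_N^2Q_{\bx}(1)\le3s_N^2 ,
\]
since the derivative in $g^{(p)}_{i_1..i_p}$ is $s_N2^{-p}x_pN^{-p/2}\la\sigma_{i_1}\cdots\sigma_{i_p}\ra$, and summing squares over indices gives at most $s_N^24^{-p}x_p^2$. The Gaussian Poincaré inequality then gives the bound $3s_N^2$.

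For the second term, represent the remaining randomness as independent blocks and apply Efron--Stein:
\begin{itemize}
\item the $N$ unary fields $u_i$;
\item for each type $a$, the clauses as a Poisson point process, equivalently the points of a Poisson process of intensity $N\lambda_a$ marked by i.i.d. clauses;
\item the reservoir process of intensity $c_N$ marked by i.i.d. cavity factors.
\end{itemize}
Resampling $u_i$ changes $\log\widehat Z$ by at most $U_i+U_i'$, which contributes at most $\frac12\E(U+U')^2\le2\E U^2$ per site. For Poisson processes use the Poisson Efron--Stein (Poincaré) inequality: $\Var f\le\E\int(D_xf)^2\,\mu(dx)$, where $D_x$ adds a point. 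Adding a clause changes $\log\widehat Z$ by at most $J$, contributing $N\lambda_a\E J_a^2$. Adding a reservoir factor changes it by at most $W$, contributing $c_N\E W^2$. Conditioning on $\mathcal F$ commutes appropriately, and Jensen keeps the same bounds for $\E_g[\cdot]$. Summing gives \eqref{eq:variance}.

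\emph{Consequence \eqref{eq:vN}.} Cauchy--Schwarz gives $\E|X-\E X|\le\sqrt{\Var X}$. The right side of \eqref{eq:variance} is $O(N)$, since $c_N=N^{1/4}$ and $s_N^2=N^{3/4}$, so $v_N=C\sqrt N$. Finally, $v_N/s_N^2=CN^{-1/4}\to0$.

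The main obstacle is justifying the Poisson Poincaré inequality with countably many clause types and marks, including interchanging sums and the almost sure convergence of the series. I would handle it first by truncating to finitely many types $a$ and then passing to the limit using \eqref{eq:moments} and dominated convergence.
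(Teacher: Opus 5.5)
Your proposal is correct and follows essentially the same route as the paper: for the cost, the sup-norm envelope $W$ for each cavity factor together with two-sided Jensen for the Gaussian term; for the variance, the Poisson add-one Poincar\'e inequality (with add-one bounds $J_a$ and $W$), Efron--Stein resampling for the unary fields, and the Gaussian Poincar\'e inequality with gradient norm at most $s_N^2Q_{\bx}(1)\le 3s_N^2$, tensorized over the independent sources and justified by truncation; your law-of-total-variance split is just this tensorization. The paper additionally sketches why the Poisson Poincar\'e inequality holds (Bernoulli approximation plus Efron--Stein), and it truncates the Gaussian series as well as the clause types, which you should also state.
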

\begin{proof}
Changing a Hamiltonian by a function of supremum norm $b$ changes its log partition function by at most $b$. Thus adding $C_N$ costs at most $c_N\E W$ in expected log partition function. Conditional on the other disorder, Jensen's inequality bounds the expected Gaussian contribution between zero and $s_N^2Q_{\bx}(1)/2$. Since $Q_{\bx}(1)\le3$, this proves \eqref{eq:cost}.

For a marked Poisson process, the variance of a square-integrable functional is bounded by the integral of the squared change produced by adding one marked point. The inequality follows, for a finite intensity, by tensorizing the Efron--Stein inequality for independent Bernoulli approximations of the process and passing to the Poisson limit. Apply it to the model clauses and to the cavity factors: their add-one bounds are respectively $J_a$ and $W$. Resampling a unary field changes the log partition function by at most $U+U'$, so its Efron--Stein contribution is at most $2\E U^2$. The Gaussian Poincar\'e inequality gives $s_N^2Q_{\bx}(1)$ for the last term. Tensorization over these independent sources proves \eqref{eq:variance}. Truncation to finitely many types and Gaussian components, followed by $L^2$ convergence, justifies the same calculation for the full series. Cauchy--Schwarz now gives \eqref{eq:vN}.
\end{proof}

\section{The spin Ghirlanda--Guerra estimates}\label{sec:GG}

We record the Gaussian argument with a bound independent of the spin test. Let $\nu$ be product uniform measure on $[1,2]^{\N}$ and define
\begin{equation}\label{eq:concentrationdefect}
 e_{N,p}(\bx)=\frac{1}{s_N2^{-p}x_p}
   \E\la|g_{N,p}(\sigma)-\E\la g_{N,p}(\sigma)\ra_{N,\bx}|\ra_{N,\bx}.
\end{equation}
For each fixed $p$,
\begin{equation}\label{eq:epaverage}
 \int e_{N,p}(\bx)\,d\nu(\bx)\longrightarrow0.
\end{equation}

Here is the concentration calculation, as in \cite[Section 3.2]{SKbook}. Fix all coefficients except $x_p=t$, set $b=s_N2^{-p}$, and write
$\varphi(t)=\log\widehat Z_N(\bx)$ and $\phi(t)=\E\varphi(t)$. Differentiation and Gaussian integration by parts give
\begin{equation}\label{eq:convexderivatives}
 \varphi'(t)=b\la g_{N,p}\ra,\qquad
 \phi'(t)=tb^2(1-\E\la R_{1,2}^p\ra),\qquad
 \phi''(t)=b^2\E\la(g_{N,p}-\la g_{N,p}\ra)^2\ra.
\end{equation}
Thus $0\le\phi'(t)\le2tb^2$ on $[0,3]$, and integration over $[1,2]$ gives
\[
 \int_1^2\E\la|g_{N,p}-\la g_{N,p}\ra|\ra\,dt\le2.
\]
For differentiable convex functions, comparison with difference quotients at $t-y,t,t+y$ gives
\[
 \E|\varphi'(t)-\phi'(t)|
 \le\phi'(t+y)-\phi'(t-y)+3v_N/y\qquad(0<y\le1).
\]
Integrating this inequality and using the bound on $\phi'$ yields
\[
 \int_1^2\E|\la g_{N,p}\ra-\E\la g_{N,p}\ra|\,dt
       \le24\left(yb+\frac{v_N}{yb}\right).
\]
For fixed $p$ and large $N$, the choice $y=\sqrt{v_N}/b$ is at most one. Consequently,
\begin{equation}\label{eq:concentrationbound}
 \int e_{N,p}(\bx)\,d\nu(\bx)
       \le\frac{2^p}{s_N}\left(2+48\sqrt{v_N}\right)
       \longrightarrow0.
\end{equation}
The convergence follows from $s_N=N^{3/8}$ and $v_N\le C\sqrt N$.

For $|f|\le1$ depending on the spins of the first $n$ replicas, Gaussian integration by parts gives
\begin{align}
 &\left|\E\la fR_{1,n+1}^p\ra_{N,\bx}
 -\frac1n\E\la f\ra_{N,\bx}\E\la R_{1,2}^p\ra_{N,\bx}
 -\frac1n\sum_{\ell=2}^n\E\la fR_{1,\ell}^p\ra_{N,\bx}\right|
 \le\frac{e_{N,p}(\bx)}n.\label{eq:GGfinite}
\end{align}
Indeed, subtract $\E\la f\ra\E\la g_{N,p}\ra$ from $\E\la fg_{N,p}(\sigma^1)\ra$. The Gaussian formula identifies this difference with $-ns_N2^{-p}x_p$ times the signed expression inside \eqref{eq:GGfinite}. Its absolute value is bounded by the numerator in \eqref{eq:concentrationdefect}. This proof permits every bounded spin test $f$; no restriction to overlap tests is made.

Define one measurable defect controlling all $p$ by
\begin{equation}\label{eq:Gamma}
 \Gamma_N(\bx)=\sum_{p\ge1}2^{-p}\min\{1,e_{N,p}(\bx)\},
 \qquad r_N=\int\Gamma_N\,d\nu\longrightarrow0.
\end{equation}
The last assertion follows from \eqref{eq:epaverage} and dominated convergence. A deterministic sequence with $\Gamma_N(\bx^{(N)})\to0$ therefore satisfies every finite-volume identity \eqref{eq:GGfinite} asymptotically. We will choose such coefficients together with the lower-bound sequence in Section~\ref{sec:selection}.

\section{Compatibility with finite cavity shifts}\label{sec:shifts}

All comparisons in this section use the same coefficient sequence $\bx$ at volumes $N$ and $N+s$, with $s$ fixed. The error bounds are uniform over $\bx\in[0,3]^{\N}$.

\subsection{Two comparison facts}

If two Hamiltonians on the same spin space satisfy $\|H-H'\|_\infty\le D$, then, for a bounded observable $f$ of $n$ replicas,
\begin{equation}\label{eq:Gibbscompare}
 |\log Z_H-\log Z_{H'}|\le D,\qquad
 |\la f\ra_H-\la f\ra_{H'}|\le2n\|f\|_\infty D.
\end{equation}
The second inequality follows by interpolation and differentiation of the product Gibbs measure. Both inequalities can be averaged when $D$ is random.

If instead the two Hamiltonians differ in an independent centered Gaussian field, and the two covariance kernels differ uniformly by at most $b$, Gaussian interpolation gives
\begin{equation}\label{eq:Gaussiancompare}
 |\E\log Z_1-\E\log Z_0|\le b,\qquad
 |\E\la f\ra_1-\E\la f\ra_0|\le C_n\|f\|_\infty b.
\end{equation}
To see this, interpolate by $\sqrt t\,g_1+\sqrt{1-t}\,g_0$. Gaussian integration by parts expresses the derivative of the expected log partition function as one half the difference of a diagonal and a two-replica covariance. For an $n$-replica observable it gives a finite sum of covariance differences involving at most $n+2$ replicas, with total absolute coefficient bounded in terms of $n$. Integration proves \eqref{eq:Gaussiancompare} for any independent background Hamiltonian.

\begin{lemma}[Poisson count changes]\label{lem:Poisson}
Adding a fixed number of independent cavity factors to $C_N$ changes
the annealed expectation of any bounded spin observable of a fixed
number of replicas by $o(1)$. Also, replacing the model count means $N\lambda_a$ by
\begin{equation}\label{eq:bulkmeans}
 \beta_{N,s,a}=\lambda_a(N+s)\left(\frac{N}{N+s}\right)^{d_a}
             =\lambda_a N(1+s/N)^{1-d_a}
\end{equation}
changes every such annealed spin expectation by $o(1)$, uniformly over
any independent background, including the two regularizations.
\end{lemma}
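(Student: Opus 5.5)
Both assertions will be proved by coupling the two Poisson systems so that they differ only by a small random perturbation of the Hamiltonian. The change in the Gibbs average is then controlled by \eqref{eq:Gibbscompare} on an event of high probability and by $\|f\|_\infty$ off it. Every bound is conditional on the independent background (the rest of $H_N$, the rest of $C_N$, and the Gaussian field $s_Ng_{N,\bx}$). It is therefore uniform over that background, and hence over $\bx$.

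\emph{First assertion.} Let $k$ be fixed and compare $\pi(c_N)$ with $\pi(c_N)+k$ cavity factors. Write $\mu_m$ for the annealed law of the replica spin array when the count is $m$. Conditionally on everything else, the annealed expectation of $\la f\rangle$ is $\sum_m P(\pi(c_N)=m)\,\phi(m)$, where $\phi(m)=\E\la f\ra$ with exactly $m$ factors; note that $|\phi|\le\|f\|_\infty$. Shifting the count by $k$ replaces $P(\pi(c_N)=m)$ by $P(\pi(c_N)=m-k)$. Hence the difference is at most $\|f\|_\infty\,\TV(\Pois(c_N),\Pois(c_N)+k)$. The standard bound $\TV(\Pois(c),\Pois(c)+1)\le C/\sqrt c$ then gives a total of $Ck\|f\|_\infty/\sqrt{c_N}=O(N^{-1/8})$. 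This argument uses only exchangeability of the cavity factors, which holds because they are i.i.d.\ and independent of everything else. No bound on $W$ is needed.

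\emph{Second assertion.} Since $d_a\ge2$, we have $\beta_{N,s,a}\le N\lambda_a$, and $N\lambda_a-\beta_{N,s,a}=N\lambda_a\bigl(1-(1+s/N)^{1-d_a}\bigr)\le\lambda_a(d_a-1)s$. Couple the counts by thinning: $\pi_a(N\lambda_a)=\pi_a(\beta_{N,s,a})+\pi'_a$, where $\pi'_a$ is an independent Poisson variable with mean at most $\lambda_a d_a s$. The two Hamiltonians then differ by the sum of the $\pi'_a$ extra clauses, which is bounded in supremum norm by $D=\sum_a\sum_{j\le\pi'_a}J_{a,j}$. Since $\E D\le s\sum_a\lambda_ad_a\E J_a<\infty$, this is a finite but not small perturbation. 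The direct estimate from \eqref{eq:Gibbscompare} is therefore not $o(1)$, and this is the main obstacle.

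To overcome it, split the types. Fix $\delta>0$ and choose a finite set $\cA_0$ such that $s\sum_{a\notin\cA_0}\lambda_ad_a\E J_a<\delta$; the remaining tail is handled by \eqref{eq:Gibbscompare}. For $a\in\cA_0$ the number of extra clauses is Poisson with bounded mean, but that mean does not vanish. I would instead use the same total-variation idea as in the first step, applied to the clause count. For each $a\in\cA_0$, the clauses of type $a$ are i.i.d.\ (with uniform indices), independent of the rest, and enter only through their count. The annealed expectation is therefore a mixture over the count with bounded integrand, so the error is at most $\|f\|_\infty\,\TV(\Pois(N\lambda_a),\Pois(\beta_{N,s,a}))$. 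Since the two means have difference $O(1)$ while both grow like $N$, this total variation is $O(N^{-1/2})$. Summing over the finitely many $a\in\cA_0$ gives $o(1)$, and letting $\delta\to0$ gives $o(1)$ overall.

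The argument conditions on the background throughout, so it is uniform over any independent background, including both regularizations. Truncation to the finite set $\cA_0$ is justified by the absolute convergence established in the first section.
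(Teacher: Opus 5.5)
Your proposal is correct and follows essentially the same route as the paper. You use a total-variation shift bound for $\Pois(c_N)$ for the added cavity factors. For the model counts you split off a finite set of types, where the Poisson count laws are close in total variation because the means differ by $O(1)$ and tend to infinity; you bound the tail types by \eqref{eq:Gibbscompare} with the expected absolute bound of the extra clauses, and then exhaust $\cA$. The only slip is a harmless missing factor $2$: with the paper's convention for $\TV$, a mixture test with values in $[-\|f\|_\infty,\|f\|_\infty]$ gives the bound $2\|f\|_\infty\TV$ rather than $\|f\|_\infty\TV$.
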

\begin{proof}
Write $\TV$ for total variation distance. For $K\sim\Pois(t)$, direct
comparison of its mass function with that of $K+1$ gives
\[
 \TV(\mathcal L(K+1),\mathcal L(K))
       =\tfrac12\E|K/t-1|\le(2\sqrt t)^{-1}.
\]
By the triangle inequality, the bound for a fixed shift $s$ is $s/(2\sqrt t)$. Apply this with $t=c_N$; conditionally on the count, the cavity factors have the same independent mark law.

For the second assertion, write
\begin{equation}\label{eq:deltameans}
 \delta_{N,s,a}=N\lambda_a-\beta_{N,s,a},\qquad
 0\le\delta_{N,s,a}\le s\lambda_a(d_a-1).
\end{equation}
For any fixed finite set of types, the two Poisson count laws approach one another in total variation. For example, couple the larger count as the smaller plus an independent Poisson variable of bounded mean, and average the preceding shift bound; $\beta_{N,s,a}\to\infty$ for each fixed $a$.

For the other types, couple the counts by adding independent clauses with means $\delta_{N,s,a}$. Their expected total absolute bound is at most
\[
 s\sum_{a\text{ outside the finite set}}\lambda_a(d_a-1)\E J_a.
\]
By \eqref{eq:Gibbscompare}, this bounds the change of any fixed bounded replica test up to its factor $2n\|f\|_\infty$. First let $N$ tend to infinity for the finite set, and then exhaust $\cA$. The tail tends to zero by \eqref{eq:moments}. This argument applies without a finite total intensity $\sum_a\lambda_a$.
\end{proof}

\subsection{The Gaussian field under a finite shift}

Write a configuration on $N+s$ sites as $(\rho,\varsigma)\in\Sigma_N\times\{-1,1\}^s$. For two such configurations,
\[
 R_{N+s}=\frac{N}{N+s}R_N(\rho,\rho')
          +\frac1{N+s}\sum_{i=1}^s\varsigma_i\varsigma_i',
 \qquad |R_{N+s}-R_N|\le\frac{2s}{N+s}.
\]
Since $\sup_{\bx}(\|Q_{\bx}\|_\infty+\|Q_{\bx}'\|_\infty)<\infty$ on $[-1,1]$,
\begin{equation}\label{eq:Gaussianerror}
 \sup_{\rho,\rho',\varsigma,\varsigma'}
 \left|s_{N+s}^2Q_{\bx}(R_{N+s})-s_N^2Q_{\bx}(R_N)\right|
 \le C\left(|s_{N+s}^2-s_N^2|+\frac{s s_{N+s}^2}{N}\right)
 =O_s(N^{-1/4}).
\end{equation}
Thus \eqref{eq:Gaussiancompare} replaces the Gaussian field on $N+s$ sites by the field on the $N$ bulk sites, constant in the cavity spins, at a cost tending to zero both for expected log partition functions and for fixed replica observables.

\subsection{The cavity factors under a finite shift}

In each incoming index of a cavity factor on $N+s$ sites, keep the index if it belongs to the bulk and otherwise replace it by a fresh uniform bulk index. The resulting indices are independent uniform indices on $N$ sites. The probability that a type-$a$ clause in such a factor is affected is at most $s(d_a-1)/(N+s)$. Since changing its arguments changes the clause by at most $2J_a$, the coupled cavity regularizations differ in expected supremum norm by at most
\begin{equation}\label{eq:reservoirerror}
 \frac{2s c_{N+s}}{N+s}\sum_a\lambda_a d_a(d_a-1)\E J_a
                  +|c_{N+s}-c_N|\E W=o(1).
\end{equation}
The last term couples the Poisson counts to mean $c_N$. This estimate uses the total size of the affected clauses and remains valid when there are infinitely many incoming indices.

\subsection{Splitting the extensive clauses}

Split each type-$a$ Poisson process on $N+s$ sites according to the number of its indices in the cavity set. The bulk terms have mean \eqref{eq:bulkmeans}. For a specified cavity site, the terms with exactly one occurrence of that site and all other indices in the bulk have mean
\begin{equation}\label{eq:singlemeans}
 \gamma_{N,s,a}=\lambda_a d_a\left(\frac N{N+s}\right)^{d_a-1}.
\end{equation}
Different groups are independent. Input-permutation symmetry gives the field in \eqref{eq:AN}, after placing the cavity spin in its last argument.

The expected absolute bound of all terms with at least two cavity indices is at most
\begin{equation}\label{eq:multihit}
 \frac{s^2}{2(N+s)}\sum_a\lambda_a d_a(d_a-1)\E J_a.
\end{equation}
Indeed, the probability of at least two hits in $d_a$ independent draws is bounded by $\binom{d_a}{2}(s/(N+s))^2$. The incoming counts in \eqref{eq:singlemeans} can be increased to their target means $\lambda_a d_a$ with expected total clause bound, over all $s$ cavity sites, at most
\begin{equation}\label{eq:singleerror}
 \frac{s^2}{N+s}\sum_a\lambda_a d_a(d_a-1)\E J_a.
\end{equation}
Both errors are $o(1)$ by \eqref{eq:moments}. The unary fields split exactly into $N$ bulk fields and $s$ fresh cavity fields.

\begin{proposition}[Stability under a fixed volume shift]\label{prop:shift}
For each bounded observable $f$ of finitely many sites and replicas, and each fixed $s\ge1$,
\begin{equation}\label{eq:shift}
 \sup_{\bx\in[0,3]^{\N}}
       |\E\la f\ra_{N+s,\bx}-\E\la f\ra_{N,\bx}|\longrightarrow0.
\end{equation}
The sites in $f$ are regarded as bulk sites in this comparison.
\end{proposition}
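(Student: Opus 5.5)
We compare the regularized model on $N+s$ sites with the one on $N$ sites through a chain of intermediate Hamiltonians on $\Sigma_N\times\{-1,1\}^s$. Each link costs $o(1)$ in annealed expectation of $f$, uniformly in $\bx\in[0,3]^{\N}$. The observable $f$ depends only on finitely many bulk sites and $n$ replicas. The last Hamiltonian in the chain will be of the form $\widehat H_{N,\bx}(\rho)+\Phi(\rho,\varsigma)$. Here $\Phi$ is independent of the bulk disorder and has the law of $s$ extra cavity factors attached to the bulk. Summing out $\varsigma$ then reduces the final comparison to Lemma~\ref{lem:Poisson}.

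\textbf{Step 1: the Gaussian field.} By \eqref{eq:Gaussianerror} the covariance $s_{N+s}^2Q_{\bx}(R_{N+s})$ differs from $s_N^2Q_{\bx}(R_N(\rho,\rho'))$ by $O_s(N^{-1/4})$, uniformly in $\bx$ and in the configurations. We apply \eqref{eq:Gaussiancompare} with all other terms as the independent background. This replaces $s_{N+s}g_{N+s,\bx}$ by $s_Ng_{N,\bx}(\rho)$ at cost $C_n\|f\|_\infty O(N^{-1/4})$.

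\textbf{Step 2: the cavity regularization.} We use the coupling of the subsection on cavity factors: redirect cavity incoming indices to fresh bulk indices, and couple the Poisson counts to mean $c_N$. The resulting random sup-norm difference $D$ has $\E D=o(1)$ by \eqref{eq:reservoirerror}. By the averaged form of \eqref{eq:Gibbscompare}, $C_{N+s}$ becomes $C_N(\rho)$ at cost $2n\|f\|_\infty\E D$.

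\textbf{Step 3: the extensive clauses.} We split the type-$a$ processes on $N+s$ sites as in the subsection on splitting. The multi-hit terms are deleted at cost \eqref{eq:multihit}. The single-hit counts are raised from $\gamma_{N,s,a}$ to $\lambda_ad_a$ at cost \eqref{eq:singleerror}, again via \eqref{eq:Gibbscompare}. The unary fields split exactly. After this step the Hamiltonian is
\[
 H^{\beta}_N(\rho)+C_N(\rho)+s_Ng_{N,\bx}(\rho)+\sum_{i=1}^sA_{N,i}(\rho,\varsigma_i),
\]
where $H^\beta_N$ has bulk means $\beta_{N,s,a}$ from \eqref{eq:bulkmeans} and the $A_{N,i}$ are fresh copies of \eqref{eq:AN}. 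The second part of Lemma~\ref{lem:Poisson} replaces $\beta_{N,s,a}$ by $N\lambda_a$. This is uniform over the independent background, which includes the cavity terms $A_{N,i}$ and both regularizations.

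\textbf{Step 4: summing out the cavity spins.} Since $f$ does not depend on $\varsigma$, summing over $\varsigma_i$ turns each $A_{N,i}$ into $\log\Av_\varsigma e^{A_{N,i}(\rho,\varsigma)}+\log 2$. The constant $\log2$ cancels in Gibbs averages. Each resulting term is exactly a fresh cavity factor of $C_N$. The bulk Gibbs measure is therefore that of $\widehat H_{N,\bx}$ with $s$ additional independent cavity factors. The first part of Lemma~\ref{lem:Poisson} removes them at cost $s/(2\sqrt{c_N})\cdot 2\|f\|_\infty$, uniformly in $\bx$. This gives \eqref{eq:shift}.

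\textbf{Main obstacle.} The most delicate link is Step~3 for countably many types, where uniformity in $\bx$ and the background is needed. The errors \eqref{eq:multihit} and \eqref{eq:singleerror} are sup-norm bounds independent of the background, so they are uniform automatically. The remaining check is that the bulk-mean replacement in Lemma~\ref{lem:Poisson} is genuinely uniform over a background containing the unbounded cavity fields. It is, because that proof uses only total variation of the counts and clause-bound couplings, neither of which depends on the background.
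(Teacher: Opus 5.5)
Your proof is correct and follows the paper's argument. The same Gaussian, cavity-factor and clause-splitting couplings reduce the $(N+s)$-site Hamiltonian to \eqref{eq:split}, after which summing out the cavity spins and Lemma~\ref{lem:Poisson} finish the comparison. The only difference is that you replace $\beta_{N,s,a}$ by $N\lambda_a$ before integrating out $\varsigma$, while the paper does both Poisson adjustments at the end; since Lemma~\ref{lem:Poisson} is uniform over any independent background, the order does not matter.
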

\begin{proof}
Apply \eqref{eq:Gaussianerror}, \eqref{eq:reservoirerror}, \eqref{eq:multihit} and \eqref{eq:singleerror}. Up to an error tending to zero, the Hamiltonian on $N+s$ sites becomes
\begin{equation}\label{eq:split}
 H_{N,s}^{\rm bulk}(\rho)+C_N(\rho)+s_Ng_{N,\bx}(\rho)
                          +\sum_{i=1}^sA_{N,i}(\rho,\varsigma_i),
\end{equation}
where the bulk Hamiltonian has model count means $\beta_{N,s,a}$ and its original $N$ unary fields. All $s$ displayed cavity fields are fresh and independent of the bulk and of its regularizations.

Integrating the cavity spins in a bulk observable adds the $s$ factors
$\log\Av_\varsigma e^{A_{N,i}(\rho,\varsigma)}$ to $C_N$. They have exactly the same mark law as its original terms. Lemma~\ref{lem:Poisson} absorbs this fixed count shift and replaces the bulk means by $N\lambda_a$. This gives \eqref{eq:shift}. Every error used above is uniform in $\bx$.
\end{proof}

\section{Spin limits and the cavity equations}\label{sec:limits}

We next explain the passage to limiting spin laws. This also establishes the continuity needed for the cavity functional.

\subsection{Spin compactness and overlaps}\label{sec:spincompactness}

For each $N$, sample replicas from the finite Gibbs measure and,
independently, an infinite sequence of uniform site indices
$I_i\in\{1,\ldots,N\}$, shared across replicas. Record
$\sigma_{I_i}^\ell$. Its annealed law is $\mu_{N,\bx}$. This infinite
array is separately exchangeable in its site and replica indices.
For every fixed number of sampled sites, site symmetry implies that
its law differs from the law at distinct prescribed sites by at most
the probability of an index collision, which tends to zero.
Compactness of $\{-1,1\}^{\N\times\N}$ gives a convergent subsequence
of these spin laws. The Aldous--Hoover representation
\cite{Aldous,Hoover}, followed by averaging the cell-specific variable,
gives precisely \eqref{eq:spins} for a random $G$.

The off-diagonal overlaps converge jointly with every finite spin array. To verify this, condition on all finite-volume replicas. For distinct $\ell,\ell'$, independent sampled indices give
\begin{equation}\label{eq:empiricaloverlap}
 \E\left[\left|
  \frac1M\sum_{i=1}^M\sigma_{I_i}^\ell\sigma_{I_i}^{\ell'}
          -R_N(\sigma^\ell,\sigma^{\ell'})\right|^2
            \biggm| (\sigma^j)_j\right]\le\frac1M.
\end{equation}
Use additional independent site samples beyond those in the finite
test. Pass first to the spin-array limit for fixed $M$, then let
$M\to\infty$. In the representation \eqref{eq:spins}, the empirical
product tends almost surely to $\int m^\ell m^{\ell'}$.
This proves joint convergence with the off-diagonal overlaps.
The diagonal finite-volume overlap is one, whereas
$\int m^\ell(v)^2\,dv$ need not be one; only off-diagonal overlaps
enter the identities considered here.

\subsection{Fresh-clause observables}

\begin{lemma}[Continuity of fresh-clause observables]\label{lem:continuity}
Suppose annealed spin laws on $N$ sites converge to a law represented by $G$. For fresh cavity fields \eqref{eq:AN} and fresh correction clauses with means $\lambda_a(d_a-1)$, the following converge to their counterparts defined by \eqref{eq:A}--\eqref{eq:outputs}: the two expected log partition ratios in \eqref{eq:functional}, and every bounded replicated cavity-spin moment on the right of \eqref{eq:SC}.
\end{lemma}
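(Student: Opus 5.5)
The argument is a truncation followed by a polynomial-in-spins expansion. First, reduce to finitely many clause types and finitely many clauses per factor. Fix $\epsilon>0$ and a finite set $\cA_0\subseteq\cA$ whose complement has $\sum_{a\notin\cA_0}\lambda_a d_a^2\E J_a<\epsilon$. Removing the clauses of types outside $\cA_0$ from the fresh fields $A_{N,i}$, from the correction $B$, and from their limiting counterparts changes each field by an amount whose expected supremum is $O(\epsilon)$.

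By \eqref{eq:Gibbscompare}, applied conditionally on the fresh randomness, this changes $\E\log\la e^{A_1}\ra$ and $\E\log\la e^B\ra$ by $O(\epsilon)$. It also changes each factor $\la\Lambda\prod\xi_i\prod m\ra/\cZ$ on the right of \eqref{eq:SC} by $O(\epsilon)$ in expectation: that factor is a Gibbs average of a bounded function under the tilt $\Lambda$. Since the right side of \eqref{eq:SC} is a finite product of quantities bounded by one, the error is at most $n$ times that. The same holds uniformly in $N$ for the finite-volume versions. Next, truncate the Poisson counts at a level $K$ and the unary fields and clause bounds $J_{a,j}$ at a level $M$. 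By the integrable envelope $W$ from \eqref{eq:envelope}, this costs an amount that vanishes as $K,M\to\infty$, uniformly in $N$. For the logarithms I will use $|\log\la e^{X}\ra-\log\la e^{Y}\ra|\le\|X-Y\|_\infty$ on the truncation event, and on the complementary event the envelope itself.

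Second, on the truncated fields, condition on all clause marks and counts. There are then finitely many clauses, each a bounded function of at most $d_a$ spins. For a finite-volume clause with incoming bulk indices, the Boltzmann factor $e^{\theta(\sigma_{i_1},\ldots)}$ equals, by \eqref{eq:extension}, a multilinear polynomial in the spins $\sigma_{i_1},\ldots$ with coefficients determined by the marks. Hence $\la e^{A_1}\ra_N$ is a polynomial in Gibbs averages of monomials $\la\prod_{k}\sigma_{I_k}\ra$ over finitely many uniformly sampled indices. The same holds for $\la e^{B}\ra_N$ and for the numerators $\la\Lambda\prod\xi_i\prod\sigma\ra$.

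For $\xi_i$ and $\Lambda$, the product over cavity spins of $\Av_\varsigma$ of products of clause factors is again multilinear in the incoming spins. So $\la\Lambda\prod_{C^1}\xi_i\prod_{C^2}\sigma\ra$ is a finite linear combination of one-replica monomials. These monomials involve distinct sampled sites outside index collisions, and collisions have probability $o(1)$. In the limit, the same expansion with $\sigma_{v}$ replaced by $m(v)$ reproduces \eqref{eq:A}--\eqref{eq:outputs}, because the expectation over thermal variables $x_{i,\ell}$ of independent spins given $m$ is $\prod m(v_i)$. Consequently, for bounded continuous functions $\Phi$, quantities like $\E\,\Phi\big((\la\text{monomial}_j\ra)_{j\le J}\big)$ are expectations of continuous functions of finitely many replicated spins. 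Raising to powers, this is a joint moment of an annealed replica spin array, and it converges under the assumed convergence of $\mu_N$. Averaging over the conditioned, $N$-independent marks and truncated counts finishes this step by dominated convergence.

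Third, handle the logarithm and the division, which I expect to be the main obstacle, since they are not polynomial. On the truncation event, the relevant quantities $\la e^{A_1}\ra$, $\la e^B\ra$ and $\cZ$ lie in $[e^{-C},e^{C}]$ with $C=C(K,M,\cA_0)$ deterministic. There $\log$ and $1/\cZ$ are Lipschitz and can be uniformly approximated by polynomials. The product $\prod_{\ell\le n}\big(\la\cdots\ra/\cZ\big)$ is therefore a uniform limit of polynomials in one-replica Gibbs averages. By replication, each such polynomial is a multi-replica spin moment and converges. Combining the three approximations, first letting $N\to\infty$ and then $K,M\to\infty$ and $\epsilon\to0$, proves the lemma.
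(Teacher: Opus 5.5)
Your proposal is correct and follows essentially the same route as the paper. You truncate to finitely many types, bounded counts and bounded clause tables, condition on the marks, discard index collisions, use convergence of joint moments of the finite Gibbs averages, approximate $\log$ and the ratios by polynomials where the denominators are bounded away from zero, and control the removed tails by the integrable envelopes and \eqref{eq:Gibbscompare}. The order differs only cosmetically, since you exhaust the types first rather than last. One small correction in that step: $\xi_i$ itself depends on the removed clauses, so you should read each factor, as the paper does, as a Gibbs average of $\prod\varsigma_i\prod m$ with the cavity spins kept as auxiliary variables, which yields the bound $2ns\sum_{a\notin\cA_0}\lambda_a d_a\E J_a$.
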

\begin{proof}
First retain finitely many types, condition on their Poisson counts and clause tables, and restrict these counts and absolute bounds to bounded sets. Only finitely many fresh incoming indices are then used. Their collisions with one another or with the prescribed test sites have probability tending to zero. On the complement, all finite Gibbs expectations involved concern a fixed finite set of sites.

Convergence of the replicated spin array gives convergence of all joint moments of these Gibbs expectations. They take values in a compact cube. Polynomial approximation therefore gives convergence against continuous functions of them, including the relevant log partition functions and ratios: on the bounded disorder event their positive denominators are bounded away from zero. In the limit, averaging the thermal coins at distinct incoming sites gives \eqref{eq:extension}, and the resulting fields are exactly \eqref{eq:A} and \eqref{eq:B}.

Remove the bounds on the finitely many counts and clause tables. The replicated spin ratios have absolute value at most one. The log ratios are bounded by the sums of the absolute clause bounds, whose expectations are finite; the distribution of these envelopes is independent of $N$. This gives the required uniform integrability.

Finally exhaust the clause types. Removing a collection $\mathcal T$ of
types changes the site log ratio in absolute expectation by at most
$\sum_{a\in\mathcal T}\lambda_a d_a\E J_a$, and the correction log
ratio by at most $\sum_{a\in\mathcal T}\lambda_a(d_a-1)\E J_a$.
For a spin moment of $n$ replicas at $s$ cavity sites, the bound is
\begin{equation}\label{eq:cavitytail}
 2ns\sum_{a\in\mathcal T}\lambda_a d_a\E J_a.
\end{equation}
To obtain the last bound, keep the incoming thermal spins and the cavity spins as auxiliary variables before taking their averages, and apply \eqref{eq:Gibbscompare} to the sum of the removed clauses. The same argument holds in the limiting representation. These tails tend to zero by \eqref{eq:moments}, completing the proof.
\end{proof}

\begin{proposition}[Cavity equations in the limit]\label{prop:cavity}
Every subsequential spin limit of $\mu_{N_j,\bx^{(j)}}$, for arbitrary deterministic $\bx^{(j)}\in[1,2]^{\N}$, satisfies all the cavity equations \eqref{eq:SC}.
\end{proposition}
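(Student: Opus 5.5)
We establish \eqref{eq:SC} by computing one finite-volume spin moment two ways, at volume $N+s$ and through the cavity decomposition, and then passing to the limit. Fix $s,n,L_0$ and the sets $C_\ell$. Consider the model at volume $N+s$ with coefficients $\bx=\bx^{(j)}$ and $N=N_j$. Take the test sites so that $\{1,\ldots,s\}$ are the last $s$ sites, which are the cavity sites, and the remaining indices of $\{1,\ldots,L_0\}$ are distinct bulk sites. By site symmetry the left side of \eqref{eq:SC} at volume $N+s$ is, up to $o(1)$, the annealed moment
\[
 \E\prod_{\ell=1}^n\Bigl\la\prod_{i\in C_\ell}\sigma_i\Bigr\ra_{N+s,\bx}.
\]
Proposition~\ref{prop:shift} shows that this differs from the same quantity at volume $N$ by $o(1)$ uniformly in $\bx$. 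The volumes $N_j+s$ and $N_j$ therefore have the same subsequential limit, which we take to be $G$. By the representation \eqref{eq:spins} and the convergence in Section~\ref{sec:spincompactness}, this moment converges to the left side of \eqref{eq:SC}. Averaging over distinct sites and thermal coins replaces spins by $m(v_i)$.

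Next we rewrite the same quantity on $N+s$ sites with the decomposition used in the proof of Proposition~\ref{prop:shift}. The estimates \eqref{eq:Gaussianerror}, \eqref{eq:reservoirerror}, \eqref{eq:multihit} and \eqref{eq:singleerror} replace the Hamiltonian by \eqref{eq:split}, with errors controlled by \eqref{eq:Gibbscompare} and \eqref{eq:Gaussiancompare}. Each replica factor is a bounded observable, so the product over $n$ replicas is also controlled. In \eqref{eq:split} the cavity spins $\varsigma_i$ interact only through the fresh fields $A_{N,i}(\rho,\varsigma_i)$. Integrating them out exactly in each Gibbs average gives
\[
 \Bigl\la\prod_{i\in C_\ell}\sigma_i\Bigr\ra
 =\frac{\bigl\la\prod_{i\le s}e^{A_{N,i}(\rho)}\prod_{i\in C_\ell^1}\xi_{N,i}(\rho)\prod_{i\in C_\ell^2}\rho_i\bigr\ra'}{\bigl\la\prod_{i\le s}e^{A_{N,i}(\rho)}\bigr\ra'}.
\]
Here $\la\cdot\ra'$ is the Gibbs measure of the bulk Hamiltonian $H^{\rm bulk}_{N,s}+C_N+s_Ng_{N,\bx}$, and $A_{N,i}(\rho)$ and $\xi_{N,i}(\rho)$ are the finite-volume analogues of \eqref{eq:outputs}. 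Lemma~\ref{lem:Poisson} replaces the bulk means $\beta_{N,s,a}$ by $N\lambda_a$. The resulting annealed law of $\la\cdot\ra'$ on bulk spins, jointly with the independent fresh fields, is then $o(1)$-close to $\mu_{N,\bx}$ on every bounded test. The use of Lemma~\ref{lem:Poisson} is legitimate only because the fresh fields are independent background. The products over $\ell$ of these ratios are bounded by one, so the replacement costs $o(1)$ in expectation.

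Finally we apply Lemma~\ref{lem:continuity} along $N_j$ to the right-hand expression. Each ratio is a bounded replicated cavity-spin moment of exactly the form on the right of \eqref{eq:SC}, and the lemma gives convergence to its limiting counterpart built from $G$. The noncavity coordinates $C_\ell^2$ are bulk sites distinct from the incoming indices except on a collision event of vanishing probability, which produces the independence noted after \eqref{eq:SC}. Equating the two limits proves \eqref{eq:SC}.

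The main obstacle is justifying that the Gibbs-level comparison of Hamiltonians passes through the ratio form. The errors in \eqref{eq:Gibbscompare} are stated for observables of a fixed measure, whereas after integrating out the cavity spins the bulk measure is tilted by $\Lambda$. Our plan is to perform every replacement before integrating out $\varsigma$, treating the full $(N+s)$-site Gibbs measure with the cavity spins as auxiliary variables, exactly as in \eqref{eq:cavitytail}. Only after all replacements do we integrate $\varsigma$ exactly. In that order each step compares bounded observables of a single measure, and no control of the denominator $\cZ$ is needed beyond the fact that it is positive.
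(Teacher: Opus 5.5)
Your proposal is correct and follows the paper's proof essentially step for step: Proposition~\ref{prop:shift} identifies the spin limit at volumes $N_j+s$, the replacements leading to \eqref{eq:split} are made on the full $(N+s)$-site system before the cavity spins are integrated out to give the ratio \eqref{eq:finitecavityratio}, and Lemmas~\ref{lem:Poisson} and~\ref{lem:continuity} finish the argument. The only difference is cosmetic: the paper keeps the bulk means $\beta_{N_j,s,a}$ inside the ratio and uses Lemma~\ref{lem:Poisson} only to conclude that this background still has spin limit $G$, which is all Lemma~\ref{lem:continuity} needs, whereas you first replace them by $N\lambda_a$ so that the background is exactly $\la\cdot\ra_{N,\bx}$.
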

\begin{proof}
Fix $s,L_0,n$ and the sets $C_\ell$. By Proposition~\ref{prop:shift}, the laws at volumes $N_j+s$ with the same coefficients $\bx^{(j)}$ have the same spin limit. By site exchangeability, designate the first $s$ sites occurring in the moment as cavity sites and apply the decomposition \eqref{eq:split}. Before averaging the disorder, the replicated moment in this decomposed system is
\begin{equation}\label{eq:finitecavityratio}
 \prod_{\ell=1}^n
 \frac{\left\la
    \Av_{\boldsymbol\varsigma}
      \prod_{i\in C_\ell^1}\varsigma_i
      \prod_{i\in C_\ell^2}\rho_i
      \exp\sum_{i=1}^s A_{N_j,i}(\rho,\varsigma_i)
                      \right\ra^{\rm bulk}}
      {\left\la\Av_{\boldsymbol\varsigma}
      \exp\sum_{i=1}^s A_{N_j,i}(\rho,\varsigma_i)
                      \right\ra^{\rm bulk}}.
\end{equation}
The noncavity indices have been relabeled inside the bulk. The background Gibbs measure here contains $C_{N_j}+s_{N_j}g_{N_j,\bx^{(j)}}$ and model means $\beta_{N_j,s,a}$. Its spin law has the same limit $G$ by Lemma~\ref{lem:Poisson}. The fresh cavity fields are independent of this background. Lemma~\ref{lem:continuity} identifies the limit of the averaged expression \eqref{eq:finitecavityratio} with the right side of \eqref{eq:SC}. Its left-side finite-volume moment converges to the left side of that equation. All preceding decomposition errors vanish uniformly in $\bx^{(j)}$.
\end{proof}

The same cavity identity holds for a joint spin moment multiplied by a
bounded Borel function of the off-diagonal overlaps, with that function
inserted inside the replicated Gibbs expectation on each side.
To obtain this extension, add fresh noncavity sites to represent
polynomials in the overlaps, pass to the empirical-overlap limit, and
then use polynomial approximation for continuous tests and equality of
finite signed measures for Borel tests. The overlaps on the transformed
side are those of the original states sampled under the cavity
reweighting.

\begin{proposition}[Spin Ghirlanda--Guerra identities in the limit]\label{prop:GGlimit}
If $\mu_{N_j,\bx^{(j)}}$ converges to a law represented by $G$ and
$\Gamma_{N_j}(\bx^{(j)})\to0$, then $G$ satisfies \eqref{eq:GG}.
\end{proposition}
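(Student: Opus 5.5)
The plan is to pass to the limit in the finite-volume identities \eqref{eq:GGfinite}, first for monomial tests $\psi(t)=t^p$, and then to extend to all bounded Borel $\psi$ by approximation.

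\emph{Step 1: finite volume.} Fix $n\ge2$, $p\ge1$ and a bounded test $f$ of the spins of the first $n$ replicas at finitely many sites, with $|f|\le1$. Since $\Gamma_{N_j}(\bx^{(j)})\to0$ and each summand of \eqref{eq:Gamma} is nonnegative with weight $2^{-p}$, we get $\min\{1,e_{N_j,p}(\bx^{(j)})\}\to0$, hence $e_{N_j,p}(\bx^{(j)})\to0$ for every fixed $p$. Then \eqref{eq:GGfinite} shows that the left side of \eqref{eq:GG} minus its right side, with $\psi(t)=t^p$ and finite-volume overlaps and Gibbs averages, tends to zero along the sequence.

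\emph{Step 2: limit of each term.} Each term in \eqref{eq:GGfinite} is an annealed expectation of $f$ times a power of one off-diagonal overlap $R_{1,\ell}$ or $R_{1,n+1}$, or a product of two such expectations. By Section~\ref{sec:spincompactness}, the off-diagonal overlaps converge jointly with every finite spin array. The finite-volume $f$ is evaluated at distinct prescribed sites, but site symmetry and the vanishing collision probability identify it with the sampled-site convention. Since $f$ and $R^p$ are bounded and $f$ depends on finitely many spins, these expectations converge to the corresponding expressions for $G$. If needed, one can make this explicit by replacing $R_N$ with the empirical overlap over $M$ fresh sampled sites, using \eqref{eq:empiricaloverlap} with error $O(M^{-1/2})$ uniformly in $N$, and then letting $M\to\infty$ after the spin limit. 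This gives \eqref{eq:GG} for all monomials, hence for all polynomials $\psi$ by linearity.

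\emph{Step 3: extension to bounded Borel $\psi$.} This is the main, though mild, obstacle. Weierstrass approximation on $[-1,1]$ extends \eqref{eq:GG} to continuous $\psi$, since all terms are linear in $\psi$ and controlled by $\|\psi\|_\infty$. For Borel $\psi$, fix $f$ and consider the three finite signed measures on $[-1,1]$:
\[
 \psi\mapsto\E\la f\psi(R_{1,n+1})\ra,\qquad
 \psi\mapsto\E\la\psi(R_{1,2})\ra,\qquad
 \psi\mapsto\E\la f\psi(R_{1,\ell})\ra .
\]
The identity states that one fixed linear combination of these measures vanishes on continuous functions. A finite signed measure that integrates every continuous function to zero is the zero measure, so the identity holds for all bounded Borel $\psi$. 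Finally, a bounded measurable $f$ of the spins of the first $n$ replicas involving infinitely many sites is reached by $L^1$ approximation with functions of finitely many sites, again because every term is linear in $f$ and bounded by $\|\psi\|_\infty\E|f-f'|$.
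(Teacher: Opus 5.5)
Your proposal is correct and follows essentially the same route as the paper. You extract $e_{N_j,p}(\bx^{(j)})\to0$ from $\Gamma_{N_j}$, pass to the limit in \eqref{eq:GGfinite} for cylinder $f$ and $\psi(t)=t^p$ via the joint convergence with overlaps from \eqref{eq:empiricaloverlap}, and extend to continuous and then Borel $\psi$ by polynomial approximation and uniqueness of finite signed measures. The differences are cosmetic: the paper extends $f$ beyond cylinder functions by the monotone-class theorem rather than your (equally valid) $L^1$ density argument, and it mentions the constant $\psi$ explicitly, which your ``all polynomials by linearity'' covers through the trivial identity $\E\la f\ra=\E\la f\ra$.
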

\begin{proof}
For each fixed $p$, \eqref{eq:Gamma} implies $e_{N_j,p}(\bx^{(j)})\to0$. Apply \eqref{eq:GGfinite} to a cylinder function of the spin array and use the joint convergence with overlaps proved from \eqref{eq:empiricaloverlap}. This gives \eqref{eq:GG} for cylinder $f$ and $\psi(t)=t^p$. Constants and uniform polynomial approximation give every continuous $\psi$. Equality of finite signed measures then gives every bounded Borel $\psi$. The monotone-class theorem extends the spin test from cylinder functions to all bounded functions measurable with respect to the first $n$ spin columns.
\end{proof}

\section{The cavity increment and simultaneous selection}\label{sec:selection}

The remaining issue is to choose the coefficients satisfying the Gaussian estimates at a volume where the cavity increment gives a lower bound. Throughout an individual comparison the coefficients are frozen.

\subsection{The one-site increment}

Set
\begin{equation}\label{eq:increment}
 D_N(\bx)=\E\log\widehat Z_{N+1}(\bx)-\E\log\widehat Z_N(\bx).
\end{equation}
Let $\la\cdot\ra_{N,\bx}'$ denote the Gibbs expectation on the $N$ bulk
sites with model means $\beta_{N,1,a}$, the original unary fields, and
regularizations $C_N+s_Ng_{N,\bx}$. Let $B_N(\rho)$ be a fresh sum of
type-$a$ clauses of means $\delta_{N,1,a}$, with independent uniform
indices in the bulk. Let $A_N$ be a fresh field \eqref{eq:AN} independent
of $B_N$ and of the bulk disorder. The decomposition of
Section~\ref{sec:shifts} gives
\begin{equation}\label{eq:incrementrepresentation}
 D_N(\bx)=\log2
       +\E\log\la\Av_\varsigma e^{A_N(\rho,\varsigma)}\ra_{N,\bx}'
       -\E\log\la e^{B_N(\rho)}\ra_{N,\bx}'+o(1),
\end{equation}
where the error is uniform in $\bx\in[0,3]^{\N}$.

For clarity, the common bulk partition function is used on both sides. Adding the cavity site and its incident clauses gives the numerator partition function $\widehat Z_{N+1}$, up to the errors \eqref{eq:Gaussianerror}, \eqref{eq:reservoirerror}, \eqref{eq:multihit} and \eqref{eq:singleerror}. Adding $B_N$ instead restores the bulk clause means exactly to $N\lambda_a$, and hence gives $\widehat Z_N$. There is no correction unary field.

The correction means converge in the weighted norm needed here. The elementary inequality
\begin{equation}\label{eq:deltalimit}
 0\le\lambda_a(d_a-1)-\delta_{N,1,a}
        \le\frac{\lambda_a d_a(d_a-1)}{2N}
\end{equation}
follows by Taylor's theorem for $(1+t)^{-(d_a-1)}$ on $[0,1/N]$. Thus $B_N$ can be coupled to the target correction field with expected supremum error at most $K_1/(2N)$.

The two log ratios in \eqref{eq:incrementrepresentation} are bounded in absolute value by the corresponding fresh clause envelopes. It follows, enlarging a constant to cover finitely many small $N$, that
\begin{equation}\label{eq:boundedD}
 \sup_{N\ge1}\sup_{\bx\in[1,2]^{\N}}|D_N(\bx)|\le C<\infty.
\end{equation}
Moreover, whenever $\mu_{N_j,\bx^{(j)}}$ converges to $G$, Lemma~\ref{lem:Poisson} gives the same spin limit for its common bulk measure. By Lemma~\ref{lem:continuity}, \eqref{eq:deltalimit} and \eqref{eq:incrementrepresentation},
\begin{equation}\label{eq:incrementlimit}
 D_{N_j}(\bx^{(j)})\longrightarrow\cP(G).
\end{equation}

\subsection{Choosing the volume and the coefficients}

Let $a_N=\int\E\log\widehat Z_N(\bx)\,d\nu(\bx)$ and
$\overline D_N=\int D_N(\bx)\,d\nu(\bx)=a_{N+1}-a_N$. By \eqref{eq:cost},
\begin{equation}\label{eq:Cesaro}
 \liminf_N\overline D_N\le\liminf_N\frac{a_N}{N}
                  =\liminf_NF_N=:L.
\end{equation}
The first inequality is the elementary inequality between the lower limit of a sequence and the lower limit of its Ces\`aro averages, applied to the increments. Choose $N_j\to\infty$ with $\overline D_{N_j}\le L+o(1)$.

Put $\eta_N=\sqrt{r_N}+N^{-1}$ and
\[
 \mathcal X_N=\{\bx:\Gamma_N(\bx)\le\eta_N\},\qquad
 b_N=\nu(\mathcal X_N^c)\le r_N/\eta_N\longrightarrow0.
\]
The uniform bound \eqref{eq:boundedD} implies
\begin{equation}\label{eq:selection}
 \inf_{\bx\in\mathcal X_N}D_N(\bx)
  \le\frac{\overline D_N+Cb_N}{1-b_N}
  \le\overline D_N+\frac{2Cb_N}{1-b_N}
\end{equation}
for all sufficiently large $N$. Choose deterministic $\bx^{(j)}\in\mathcal X_{N_j}$ whose increment is within $1/j$ of this infimum. Then
\begin{equation}\label{eq:selected}
 \Gamma_{N_j}(\bx^{(j)})\longrightarrow0,
 \qquad \limsup_jD_{N_j}(\bx^{(j)})\le L.
\end{equation}

Extract a subsequence along which the spin laws converge to $G$. Proposition~\ref{prop:cavity} gives the cavity equations, Proposition~\ref{prop:GGlimit} gives the spin Ghirlanda--Guerra identities, and \eqref{eq:incrementlimit} together with \eqref{eq:selected} gives $\cP(G)\le L$. This proves Theorem~\ref{thm:main}.

The averaging over $\bx$ was used only to find a good deterministic coefficient sequence. The limiting law itself is obtained with those coefficients fixed before averaging the disorder. In the shift and increment comparisons at $N_j$, the same $\bx^{(j)}$ is used at $N_j+s$; the proof makes no regularity assumption on the relation between $\bx^{(j)}$ and $\bx^{(j+1)}$.

\section{The diluted REM perturbation}\label{sec:REM}

We now specialize Theorem~\ref{thm:main} to the model in
\cite[Section 3]{MPformula}. Take an original clause
$\theta:\{-1,1\}^K\to\mathbb R$, with fixed arity $K\ge2$,
input-permutation-invariant law, and intensity $\lambda>0$. Set
$J_0=\max_{\boldsymbol t\in\{-1,1\}^K}|\theta(\boldsymbol t)|$ and
assume $\E e^{tJ_0}<\infty$ for every $t>0$.
Only $\E J_0^2<\infty$ is needed for Theorem~\ref{thm:main}; the
exponential assumption also supplies the integrability used in the
reproducibility argument. Fix $\epsilon>0$ and set
\begin{equation}\label{eq:REM}
 u_i(\varsigma)=g_i\frac{1+\varsigma}{2},\qquad g_i\sim N(0,\epsilon),\qquad
 \theta_d^\epsilon(\boldsymbol t)\sim
        N(0,\epsilon2^{-4d})\quad
        (\boldsymbol t\in\{-1,1\}^d,\ d\ge2).
\end{equation}
For each $d$, the $2^d$ table entries are independent. All tables, unary
fields, Poisson counts, and sampled indices are mutually independent
and independent of the original model. Add $\Pois(N)$ REM clauses of
each arity $d\ge2$ to the original $\Pois(\lambda N)$ clauses. Thus,
\begin{align}
 H_N^{\mathrm{pert},\epsilon}(\sigma)
 &=\sum_{i=1}^Ng_i\frac{1+\sigma_i}{2}
   +\sum_{d\ge2}\sum_{j=1}^{\pi_d(N)}
        \theta_{d,j}^\epsilon(\sigma_{i_{d,j,1}},\ldots,
                                      \sigma_{i_{d,j,d}}),\notag\\
 H_N^\epsilon&=H_N^{\rm model}+H_N^{\mathrm{pert},\epsilon},\qquad
 F_N^\epsilon=\frac1N\E\log\sum_{\sigma\in\Sigma_N}
                                      e^{H_N^\epsilon(\sigma)}.
                                      \label{eq:REMHamiltonian}
\end{align}
The original clauses and the REM clauses of arity $K$ are separate
types in the general construction, even though their arities coincide.

For $J_d=\max|\theta_d^\epsilon|$, the Gaussian maximum bound gives
\begin{equation}\label{eq:REMmoments}
 \E J_d\le C\sqrt\epsilon\sqrt d\,2^{-2d},\qquad
 \E J_d^2\le C\epsilon d\,2^{-4d}.
\end{equation}
For example, these follow by integrating the union bound for the tails of $2^d$ absolute Gaussian variables. Consequently,
\[
 \sum_{d\ge2}d^2\E J_d<\infty,
 \qquad \sum_{d\ge2}d^2\E J_d^2<\infty,
\]
so \eqref{eq:moments} holds for the full model \eqref{eq:REMHamiltonian}. Theorem~\ref{thm:main} therefore applies with the entire REM series present.

Write $N_{i,0}\sim\Pois(\lambda K)$, $N_{i,d}\sim\Pois(d)$,
$M_0\sim\Pois(\lambda(K-1))$, and $M_d\sim\Pois(d-1)$ for
independent counts. The cavity field is
\begin{align}
 A_i(m,\varsigma)
 &=\sum_{a=1}^{N_{i,0}}
       \theta_{i,a}\bigl(m(v_{i,0,a,1}),\ldots,
                         m(v_{i,0,a,K-1}),\varsigma\bigr)
       +g_i\frac{1+\varsigma}{2}\notag\\
 &\quad+\sum_{d\ge2}\sum_{a=1}^{N_{i,d}}
       \theta_{i,d,a}^\epsilon\bigl(m(v_{i,d,a,1}),\ldots,
                                  m(v_{i,d,a,d-1}),\varsigma\bigr),\label{eq:REMfield}
\end{align}
and its correction field is
\begin{align}
 B(m)
 &=\sum_{a=1}^{M_0}
       \theta_a\bigl(m(v_{0,a,1}),\ldots,
                             m(v_{0,a,K})\bigr)\notag\\
 &\quad+\sum_{d\ge2}\sum_{a=1}^{M_d}
       \theta_{d,a}^\epsilon\bigl(m(v_{d,a,1}),\ldots,
                                        m(v_{d,a,d})\bigr).
                                                        \label{eq:REMcorrection}
\end{align}
The clauses and site variables in the correction field are independent
of those in the cavity fields, and all are independent of $G$.
The two regularizations contribute no additional terms to the limiting
fields.

Let $\cG_\epsilon$ be the class of random spin laws satisfying
\eqref{eq:GG} and \eqref{eq:SC} for the field \eqref{eq:REMfield}, and
let $\cP_\epsilon$ be \eqref{eq:functional} with
\eqref{eq:REMfield}--\eqref{eq:REMcorrection}. The notation and
definitions of the fields, the class $\cG_\epsilon$, and the functional
$\cP_\epsilon$ agree with \cite[Section 3]{MPformula}.

\begin{corollary}[Cavity lower bound with the REM perturbation]\label{cor:A4}
For every fixed $\epsilon>0$ there is $G_\epsilon\in\cG_\epsilon$,
obtained as a subsequential spin limit of the regularized models with
deterministically chosen Gaussian coefficients, such that
\begin{equation}\label{eq:A4}
 \liminf_{N\to\infty}F_N^\epsilon
       \ge\cP_\epsilon(G_\epsilon)
       \ge\inf_{G\in\cG_\epsilon}\cP_\epsilon(G).
\end{equation}
Here the regularizations have vanishing free-energy cost per spin, and the limiting identities contain the original clauses and the full extensive REM perturbation.
\end{corollary}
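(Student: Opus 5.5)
The plan is to apply Theorem~\ref{thm:main} directly to the perturbed model \eqref{eq:REMHamiltonian}, with $\epsilon>0$ held fixed throughout, and then to check that its limiting objects are exactly the fields \eqref{eq:REMfield}--\eqref{eq:REMcorrection}.

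\emph{Step 1: place the model in the general framework.} Take the index set $\cA=\{0\}\cup\{d\ge2\}$. Type $0$ is the original clause, with arity $K$, intensity $\lambda$ and table $\theta$. Type $d$ is the REM clause, with arity $d$, intensity $1$ and table $\theta^\epsilon_d$. The unary field is $u(\varsigma)=g(1+\varsigma)/2$, so $U\le|g|$ and $\E U^2=\epsilon$.

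\emph{Step 2: verify the hypotheses.} Permutation invariance of the REM laws holds because the $2^d$ table entries are i.i.d. Gaussians. The moment condition \eqref{eq:moments} follows from $\E J_0^2<\infty$, which is implied by the exponential moment, together with \eqref{eq:REMmoments} and the summability of $d^2\sqrt d\,2^{-2d}$ and $d^3 2^{-4d}$. Theorem~\ref{thm:main} then produces a random spin law $G_\epsilon$ satisfying \eqref{eq:GG} and \eqref{eq:SC} with $\liminf_N F^\epsilon_N\ge\cP(G_\epsilon)$. It also states that $G_\epsilon$ is a subsequential limit of the regularized models with deterministic Gaussian coefficients, and that the regularizations cost $o(1)$ per spin by Lemma~\ref{lem:pressureconc}.

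\emph{Step 3: identify the fields.} Substitute the types into \eqref{eq:A} and \eqref{eq:B}. The incoming Poisson means $\lambda_a d_a$ become $\lambda K$ and $d$. The correction means $\lambda_a(d_a-1)$ become $\lambda(K-1)$ and $d-1$. These are precisely the counts $N_{i,0},N_{i,d},M_0,M_d$. Placing the cavity spin in the last argument is legitimate by input-permutation invariance, and the unary field gives the term $g_i(1+\varsigma)/2$. Hence \eqref{eq:A} coincides in law with \eqref{eq:REMfield}, and \eqref{eq:B} with \eqref{eq:REMcorrection}, including all the stated independence relations. Because the cavity equations and $\cP$ depend only on the joint law of the fresh fields together with $G$, this gives $\cP(G_\epsilon)=\cP_\epsilon(G_\epsilon)$ and $G_\epsilon\in\cG_\epsilon$. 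The second inequality in \eqref{eq:A4} is then immediate.

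\emph{Main obstacle.} The only delicate point is bookkeeping, namely keeping type $0$ separate from the REM type of arity $K$. This matters because the general construction treats them as distinct elements of $\cA$ with independent Poisson counts. The separation is consistent with \eqref{eq:REMfield}, which sums them separately. If one preferred to merge the two types into a single type of arity $K$, their superposition would again be a Poisson process with a mixed mark law, so the result would be unchanged in law.
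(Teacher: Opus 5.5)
Your proposal is correct and follows the paper's route. You encode the model as the type set $\{0\}\cup\{d\ge2\}$ with intensities $\lambda$ and $1$, check \eqref{eq:moments} from $\E J_0^2<\infty$ and \eqref{eq:REMmoments}, apply Theorem~\ref{thm:main} at fixed $\epsilon$, and read off \eqref{eq:REMfield}--\eqref{eq:REMcorrection} from \eqref{eq:A}--\eqref{eq:B}, exactly as in Section~\ref{sec:REM}. One minor point: the permutation-invariance remark in Step 3 is unnecessary there, since \eqref{eq:A} already places the cavity spin last.
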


This is the cavity lower bound stated in \cite[Theorem 3.1]{MPformula}.
Its proof uses only the regularization and cavity arguments above; it
does not use the reproducibility theorem or the variational reduction
proved there.

The exponential integrability used in the uniform cavity estimates of
the reproducibility argument also holds for the REM terms. Gaussian
concentration of the maximum gives, for every fixed $t>0$,
\[
 \E e^{tJ_d}\le
    \exp\left(t\E J_d+\tfrac12t^2\epsilon2^{-4d}\right),
 \qquad
 \E(e^{tJ_d}-1)\le C_{t,\epsilon}\sqrt d\,2^{-2d}.
\]
Thus $\sum_{d\ge2}d^q\E(e^{tJ_d}-1)<\infty$ for every fixed
$q\ge0$. Together with the exponential moments of $J_0$ and the
Gaussian unary field, this supplies the required integrability.
The assumption on $J_0$ holds for diluted $K$-spin clauses with
Gaussian or Rademacher couplings and for bounded $K$-sat clauses.

Finally, let $F_N^{\rm model}$ and $\cP_0$ be the free energy per spin
and the cavity functional with the REM perturbation removed. The
absolute-bound comparison gives
\begin{equation}\label{eq:removeREM}
 \sup_N|F_N^\epsilon-F_N^{\rm model}|
       \le\E|g|+\sum_{d\ge2}\E J_d\le C\sqrt\epsilon,
 \qquad
 \sup_G|\cP_\epsilon(G)-\cP_0(G)|\le C\sqrt\epsilon.
\end{equation}
In the second inequality, the supremum is over all random spin laws
$G$, with the same $G$ used in both functionals. The arity weights are
$d$ in the site term and $d-1$ in the correction term, and their sum is
still summable by \eqref{eq:REMmoments}. The constant is independent of
$N$, $G$, and $\epsilon$. These bounds allow the cavity lower bound to
be combined with the structural variational reduction of
\cite{MPformula} at fixed $\epsilon$, followed by $\epsilon\downarrow0$.


\begin{thebibliography}{9}
\bibitem{ASS} M. Aizenman, R. Sims and S. L. Starr. Extended variational principle for the Sherrington-Kirkpatrick spin-glass model. \emph{Phys. Rev. B} \textbf{68} (2003), 214403.
\bibitem{Aldous} D. Aldous. Representations for partially exchangeable arrays of random variables. \emph{J. Multivariate Anal.} \textbf{11} (1981), 581--598.
\bibitem{GG} S. Ghirlanda and F. Guerra. General properties of overlap probability distributions in disordered spin systems. Towards Parisi ultrametricity. \emph{J. Phys. A} \textbf{31} (1998), 9149--9155.
\bibitem{Hoover} D. N. Hoover. Row-column exchangeability and a generalized model for probability. In \emph{Exchangeability in Probability and Statistics}, North-Holland (1982), 281--291.
\bibitem{Spins} D. Panchenko. Spin glass models from the point of view of spin distributions. \emph{Ann. Probab.} \textbf{41} (2013), 1315--1361.
\bibitem{SKbook} D. Panchenko. \emph{The Sherrington--Kirkpatrick Model}. Springer Monographs in Mathematics, Springer, New York (2013).
\bibitem{rRSB} D. Panchenko. Structure of finite-RSB asymptotic Gibbs measures in the diluted spin glass models. \emph{J. Statist. Phys.} \textbf{162} (2016), 1--42.
\bibitem{MPformula} D. Panchenko. \emph{The M\'ezard-Parisi formula}. Preprint (2026).
\end{thebibliography}
\end{document}